\documentclass{amsart}
\usepackage{setspace, amsmath, amsthm, amssymb, amsfonts, amscd, epic, graphicx, ulem, dsfont}
\usepackage[T1]{fontenc}
\usepackage{multirow}
\usepackage{bbm}
\usepackage{enumerate}

\makeatletter \@namedef{subjclassname@2010}{
  \textup{2010} Mathematics Subject Classification}
\makeatother

\newtheorem{thm}{Theorem}[section]
\newtheorem{cor}[thm]{Corollary}
\newtheorem{lem}[thm]{Lemma}
\newtheorem{pro}[thm]{Proposition}

\theoremstyle{remark}
\newtheorem*{rema}{Remark}

\theoremstyle{definition}

\newcommand{\Ima}{\text{\rm{Im}}}
\newcommand{\Real}{\text{\rm{Re}}}

\newcommand{\R}{\mathbb{R}}
\newcommand{\Z}{\mathbb{Z}}
\newcommand{\N}{\mathbb{N}}

\begin{document}

\title[Roots of operators]{On the Existence of Normal Square and Nth Roots of Operators}
\author[M. H. MORTAD]{Mohammed Hichem Mortad}

\dedicatory{}
\thanks{}
\date{}
\keywords{Square roots. Normal, self-adjoint and positive operators.
Real and imaginary parts of an operator. Spectrum. Cartesian
decomposition}

\subjclass[2010]{Primary 47A62, Secondary 47A05.}

 \address{ Department of
Mathematics, University of Oran 1, Ahmed Ben Bella, B.P. 1524, El
Menouar, Oran 31000, Algeria.\newline {\bf Mailing address}:
\newline Pr Mohammed Hichem Mortad \newline BP 7085 Seddikia Oran
\newline 31013 \newline Algeria}

\email{mhmortad@gmail.com, mortad@univ-oran.dz.}

\begin{abstract}
The primary purpose of this paper is to show the existence of normal
square and nth roots of some classes of bounded operators on Hilbert
spaces. Two interesting simple results hold. Namely, under simple
conditions we show that if any operator $T$ is such that $T^2=0$,
then this implies that $T$ is normal and so $T=0$. Also, we will see
when the square root of an arbitrary bounded operator is normal.
\end{abstract}

\maketitle

\section{Introduction}
Let $H$ be a complex Hilbert space, and let $B(H)$ denote the
algebra of all bounded linear operators on $H$.

By a square root of $A\in B(H)$, we mean a $B\in B(H)$ such that
$B^2=A$. An $A\in B(H)$ is called positive if
\[<Ax,x>\geq0,~\forall x\in H.\]

It is known that any positive operator $A$ has a unique positive
square root (which is denoted  by $\sqrt A$ or $A^{\frac{1}{2}}$ and
these notations are exclusively reserved to the unique positive
square root). However, there are non normal $N$ such that $N^2$ is
normal. As an extreme example, just consider any non-normal $N$ such
that $N^2=0$. In fact, the identity $2\times 2$ matrix $I$ has an
infinite number of \textit{self-adjoint} square roots. Indeed, the
self-adjoint
\[A_x=\left(
        \begin{array}{cc}
          x & \sqrt{1-x^2} \\
          \sqrt{1-x^2} & -x \\
        \end{array}
      \right)
\]
represents a square root of $I$ for each $x\in [-1,1]$.

There are some quite known research papers on this topic. For
instance, readers may wish to consult:
\cite{Conway-Morrel-Roots-Logarithms}, \cite{Kurepa-normal-root},
\cite{Putnam-sq-rt-normal}, \cite{Putnam-square-rt-s-a-logarithm},
\cite{Radjavi-Rosenthal-sq-roots-normal} and \cite{Stampfli-roots}.
In particular, Putnam \cite{Putnam-sq-rt-normal} gave a condition
guaranteeing that the square root of a normal operator be normal.
The problem considered in this paper is of this sort.

Recall that any $T\in B(H)$ is expressible as $T=A+iB$ where $A,B\in
B(H)$ are self-adjoint. As it is customary, we denote $A$ by $\Real
T$ and $B$ by $\Ima T$.

It is also known (cf. \cite{Mortad-Oper-TH-BOOK-WSPC}) that $T=0$
iff $A=B=0$. Therefore, if $A+iB=C+iD$, then $A=C$ and $B=D$
whenever $A,B,C,D$ are self-adjoint. It is also easily verifiable
that $T$ is normal iff $AB=BA$.

We also recall some known results which will be called on below
(these are standard facts, see \cite{Mortad-Oper-TH-BOOK-WSPC} for
proofs).

\begin{thm}\label{theorem squrae root first} Let $A,B\in B(H)$ be self-adjoint. Then:
\[0\leq A\leq B\Longrightarrow \sqrt A\leq \sqrt B\]
\end{thm}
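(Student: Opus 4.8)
The plan is to deduce this (a special case of the L\"owner--Heinz inequality) from the classical integral representation of the square root. The starting point is the numerical identity
\[
\sqrt{s}=\frac{1}{\pi}\int_0^\infty \frac{s}{s+t}\,\frac{dt}{\sqrt t},\qquad s\geq 0,
\]
which one verifies for $s>0$ by the substitution $t=su$ together with $\int_0^\infty u^{-1/2}(1+u)^{-1}\,du=\pi$, and which holds trivially at $s=0$. The reason for writing $\sqrt s$ in this form is that, for each fixed $t>0$, the map $s\mapsto \frac{s}{s+t}=1-\frac{t}{s+t}$ is a bounded function of $s$ on $[0,\infty)$ whose operator monotonicity is elementary, and this monotonicity survives integration against the positive measure $\frac{1}{\pi}\,t^{-1/2}\,dt$.

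First I would pass from scalars to operators. For $A\in B(H)$ with $A\geq0$, the operator $A+t$ is invertible for every $t>0$, so $A(A+t)^{-1}$ makes sense, and $\int_0^\infty A(A+t)^{-1}\,t^{-1/2}\,dt$ converges in operator norm: near $t=0$ one has $\|A(A+t)^{-1}\|\leq1$ with $t^{-1/2}$ integrable, and near $t=\infty$ one has $\|A(A+t)^{-1}\|\leq\|A\|/t$, which contributes an integrable $t^{-3/2}$. Applying the continuous functional calculus to the identity above --- more carefully, approximating the norm-convergent integral by Riemann sums $\sum_k c_k A(A+t_k)^{-1}$, each of which equals $g(A)$ for the rational function $g(s)=\sum_k c_k\frac{s}{s+t_k}$, and noting that as the partition refines these rational functions converge uniformly on $\sigma(A)\subseteq[0,\|A\|]$ to $s\mapsto\sqrt s$ --- gives
\[
\sqrt A=\frac{1}{\pi}\int_0^\infty A(A+t)^{-1}\,\frac{dt}{\sqrt t}\qquad(A\geq0).
\]

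The heart of the argument is then the resolvent inequality: \emph{if $0\leq A\leq B$, then $A(A+t)^{-1}\leq B(B+t)^{-1}$ for every $t>0$.} Since $A(A+t)^{-1}=I-t(A+t)^{-1}$, and similarly for $B$, this is equivalent to $(B+t)^{-1}\leq(A+t)^{-1}$, that is, to the fact that inversion reverses order on the positive invertible operators $A+t\leq B+t$. For that fact: from $0<X\leq Y$ one gets $I=X^{-1/2}XX^{-1/2}\leq X^{-1/2}YX^{-1/2}$; a positive operator dominating $I$ has spectrum in $[1,\infty)$, hence an inverse with spectrum in $(0,1]$, so $X^{1/2}Y^{-1}X^{1/2}=(X^{-1/2}YX^{-1/2})^{-1}\leq I$, and conjugating by $X^{-1/2}$ yields $Y^{-1}\leq X^{-1}$. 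Integrating the resolvent inequality against $\frac{1}{\pi}\,t^{-1/2}\,dt$ and invoking the integral representation for $\sqrt A$ and $\sqrt B$ then gives $\sqrt A\leq\sqrt B$.

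The step I expect to be the main obstacle is the honest justification of the integral representation, i.e.\ that the norm-convergent operator integral may be evaluated through the spectral measure of $A$; the Riemann-sum argument above works, but the uniform convergence of the approximating rational functions must be checked all the way down to $s=0$, where $\sqrt{\cdot}$ has infinite derivative. If one prefers to state the integral representation only for positive \emph{definite} operators, a clean alternative is to first apply the inequality to $A+\varepsilon\leq B+\varepsilon$ and then let $\varepsilon\downarrow0$, using that $X\mapsto\sqrt X$ is norm-continuous on positive operators (Weierstrass-approximate $\sqrt{\cdot}$ on $[0,\|B\|+1]$ by polynomials) and that the positive cone is closed.
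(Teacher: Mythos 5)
Your argument is correct. Note, however, that the paper itself offers no proof of this theorem: it is listed among the ``standard facts'' recalled in the introduction, with the proof delegated to the reference \cite{Mortad-Oper-TH-BOOK-WSPC}, so there is no in-paper argument to compare yours against. What you give is the classical L\"owner--Heinz-type proof via the integral representation $\sqrt{s}=\frac{1}{\pi}\int_0^\infty \frac{s}{s+t}\,t^{-1/2}\,dt$, reducing operator monotonicity of the square root to the antitonicity of inversion, $0<X\leq Y\Rightarrow Y^{-1}\leq X^{-1}$, which you prove correctly by conjugation with $X^{-1/2}$. The worry you flag about justifying the operator integral at $s=0$ is not a real obstacle: your own tail estimates already give it, since for the truncated scalar integrals one has $0\leq \pi\sqrt{s}-\int_\varepsilon^M\frac{s}{s+t}\,t^{-1/2}\,dt\leq 2\sqrt{\varepsilon}+2\|A\|M^{-1/2}$ uniformly for $s\in[0,\|A\|]$, so the truncated operator integrals (which equal $f_{\varepsilon,M}(A)$ by continuity of the integrand in $t$) converge in norm to $\pi\sqrt{A}$; alternatively, your fallback of proving the inequality for $A+\varepsilon\leq B+\varepsilon$ and letting $\varepsilon\downarrow 0$, using norm continuity of the square root and closedness of the positive cone, is equally sound. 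Either way the proof is complete and self-contained, which is arguably more than the paper provides for this statement.
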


\begin{pro}\label{absssssssssssssssss}
Let $A,B\in B(H)$ be self-adjoint. Then
\[|A|\leq B\Longrightarrow -B\leq A\leq B,\]
where $|A|=\sqrt{A^*A}$.
\end{pro}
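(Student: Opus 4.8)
The plan is to reduce the statement to the standard inequality $-|A|\le A\le |A|$, valid for every self-adjoint $A$, and then to chain it with the hypothesis $|A|\le B$ using transitivity of the operator order. As a preliminary remark, since $A$ is self-adjoint we have $A^*A=A^2$, so $|A|=\sqrt{A^2}$; being \emph{the} (unique) positive square root of $A^2$, the operator $|A|$ commutes with every bounded operator commuting with $A^2$, in particular with $A$ itself.

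The heart of the matter is therefore to establish $-|A|\le A\le |A|$. One way is to apply the continuous functional calculus to the self-adjoint operator $A$: the functions $t\mapsto |t|-t$ and $t\mapsto |t|+t$ are continuous and nonnegative on $\mathbb{R}\supseteq \sigma(A)$, and since applying $t\mapsto|t|=\sqrt{t^2}$ to $A$ yields $\sqrt{A^2}=|A|$, it follows that $|A|-A$ and $|A|+A$ are positive. Alternatively, staying closer to the elementary tools used in the paper, put $S=|A|-A$ and $T=|A|+A$: these are self-adjoint, $S+T=2|A|\ge 0$, and $ST=TS=|A|^2-A^2=0$ by the commutation noted above. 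From $ST=TS=0$ the ranges of $S$ and $T$ are mutually orthogonal; hence $T$ vanishes on $\overline{\ran S}$, so that $\langle Sx,x\rangle=\langle (S+T)x,x\rangle\ge 0$ for $x\in\overline{\ran S}$, while $S$ vanishes on the complementary subspace $\ker S=(\overline{\ran S})^{\perp}$. A routine orthogonal decomposition of an arbitrary $x$ then gives $\langle Sx,x\rangle\ge 0$ for all $x$, and symmetrically $\langle Tx,x\rangle\ge 0$; that is, $-|A|\le A\le |A|$.

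Finally, combining $-|A|\le A\le |A|$ with the hypothesis $|A|\le B$ and using transitivity gives $-B\le -|A|\le A\le |A|\le B$, which is precisely the assertion. The only genuine obstacle is the middle inequality $-|A|\le A\le |A|$; once that (entirely standard) fact is in hand, the rest is just chaining inequalities. It is worth stressing that one cannot shortcut this by squaring $|A|\le B$ into $A^2=|A|^2\le B^2$, since $t\mapsto t^2$ is not operator monotone, so going through the relation between $A$ and $|A|$ is essential.
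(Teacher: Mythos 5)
Your proof is correct. Note that the paper does not actually prove this proposition: it is listed among the standard facts recalled from \cite{Mortad-Oper-TH-BOOK-WSPC}, so there is no in-paper argument to compare against. Your route --- first establishing $-|A|\leq A\leq |A|$ and then chaining with $|A|\leq B$ by transitivity --- is the standard one, and both of your justifications of the middle inequality hold up: the functional-calculus version is immediate (with the correct identification of $f(A)$, $f(t)=|t|$, with $\sqrt{A^2}=|A|$ via uniqueness of the positive square root), and the more elementary version with $S=|A|-A$, $T=|A|+A$ is complete as written, since $S+T=2|A|\geq 0$, $ST=TS=|A|^2-A^2=0$, and self-adjointness gives $\ran S\subseteq \ker T$ and $\ker S=(\overline{\ran S})^{\perp}$, so the orthogonal decomposition argument does yield $\langle Sx,x\rangle\geq 0$ and $\langle Tx,x\rangle\geq 0$ for all $x$. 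Your closing caution is also well placed: one cannot shortcut by squaring $|A|\leq B$, as $t\mapsto t^2$ is not operator monotone, so passing through $-|A|\leq A\leq |A|$ (or some equivalent) is indeed the essential step.
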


In the end, we assume that readers are familiar with other notions
and results on $B(H)$.

\section{Existence of Normal Roots}

\begin{thm}\label{THM improvment PUTNAM}
Let $C\in B(H)$ be self-adjoint and let $T=A+iB\in B(H)$ be such
that $T^2=C$.
\begin{enumerate}
  \item If $\sigma(A)\cap \sigma(-A)=\varnothing$, then $T$ is
  self-adjoint and invertible.
  \item If $\sigma(B)\cap \sigma(-B)=\varnothing$, then $T$ is
  skew symmetric (that is, $T^*=-T$) and invertible.
\end{enumerate}
\end{thm}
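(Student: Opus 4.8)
The plan is to extract from $T^{2}=C$ a single algebraic identity tying the self-adjoint parts $A$ and $B$ together, and then to use the spectral hypothesis to annihilate one of them. Writing $T=A+iB$ and squaring,
\[
T^{2}=(A+iB)^{2}=(A^{2}-B^{2})+i(AB+BA),
\]
where both $A^{2}-B^{2}$ and $AB+BA$ are self-adjoint (for the latter, $(AB+BA)^{*}=B^{*}A^{*}+A^{*}B^{*}=BA+AB$). Since $C$ is self-adjoint, its Cartesian decomposition is $C=C+i\,0$, so the uniqueness of the Cartesian decomposition recalled in the Introduction forces
\[
A^{2}-B^{2}=C\qquad\text{and}\qquad AB+BA=0.
\]
Thus everything rests on the anticommutation relation $AB=-BA$, and the point to notice is that this relation is \textbf{symmetric in $A$ and $B$}; that symmetry is exactly why (1) and (2) are mirror images of one another.

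For part (1), I would show that $\sigma(A)\cap\sigma(-A)=\varnothing$ together with $AB=-BA$ forces $B=0$. One clean route is to read $AB-B(-A)=0$ as a Sylvester equation: since $\sigma(A)\cap\sigma(-A)=\varnothing$, Rosenblum's theorem says that $X\mapsto AX-X(-A)$ is invertible on $B(H)$, so its only zero is $X=0$, whence $B=0$. If one prefers to stay more elementary, one can instead use the continuous functional calculus of the self-adjoint operator $A$: an easy induction on $n$ gives $A^{n}B=B(-A)^{n}$, hence $p(A)B=Bp(-A)$ for every polynomial $p$, and by uniform approximation $f(A)B=Bf(-A)$ for every continuous $f$ on $[-\|A\|,\|A\|]$. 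Now pick $f$ equal to $1$ on the compact set $\sigma(A)$ and $0$ on the disjoint compact set $\sigma(-A)$ (such an $f$ is continuous, being locally constant on $\sigma(A)\cup\sigma(-A)$); by the isometry of the functional calculus one gets $f(A)=I$ and $f(-A)=0$, so $B=f(A)B=Bf(-A)=0$. Either way $T=A$ is self-adjoint; and since $0\in\sigma(A)$ would force $0=-0\in\sigma(-A)$, contradicting disjointness, $A$—hence $T$—is invertible.

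For part (2), I would apply the very same argument with the roles of $A$ and $B$ interchanged: the relation $AB+BA=0$ is unchanged, so $\sigma(B)\cap\sigma(-B)=\varnothing$ now yields $A=0$. Then $T=iB$, so $T^{*}=-iB^{*}=-iB=-T$, i.e. $T$ is skew symmetric, and $0\notin\sigma(B)$ makes $B$, and hence $T=iB$, invertible. I expect the only genuine obstacle to be the middle step—passing from the anticommutation and the spectral separation to the vanishing of the ``other'' self-adjoint part—and even there the difficulty is mainly one of packaging: either invoke a Sylvester/Rosenblum-type statement, or hand-build the separating continuous function and push $A^{n}B=B(-A)^{n}$ through the functional calculus. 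The expansion of the square, the uniqueness of the Cartesian form, and the invertibility remark are all routine.
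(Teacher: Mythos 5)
Your proposal is correct and follows essentially the same route as the paper: obtain $A^2-B^2=C$ and $AB+BA=0$ from the uniqueness of the Cartesian decomposition, then apply Rosenblum's theorem (the paper's Lemma on $AX-XB=S$) to the equation $AX-X(-A)=0$ to force $B=0$ (resp.\ $A=0$), and conclude invertibility from $0\notin\sigma(A)$ (resp.\ $0\notin\sigma(B)$). Your alternative elementary argument via the continuous functional calculus, using $f(A)B=Bf(-A)$ with $f=1$ on $\sigma(A)$ and $f=0$ on $\sigma(-A)$, is a valid substitute for the Rosenblum step, but the main line of reasoning coincides with the paper's.
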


The proof is based upon the following known result:

\begin{lem}\label{Rosenblum AX-XB=C THM} (\cite{rosenblum: bx-ax=c} or \cite{Bhatia-Rosenthal-OPER EQUATION})
Let $A,B\in B(H)$ be such that $\sigma(A)\cap\sigma(B)=\varnothing$.
Then the equation $AX-XB=S$ has a unique solution $X$ (in $B(H)$)
for each $S\in B(H)$.
\end{lem}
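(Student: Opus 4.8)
The plan is to recognise the left-hand side of the equation as a single bounded linear operator on the Banach space $B(H)$ and to reduce everything to its invertibility. Define $\mathcal{T}\colon B(H)\to B(H)$ by $\mathcal{T}(X)=AX-XB$. The assertion of the lemma is exactly that $\mathcal{T}$ is a bijection, so I would aim to show $0\notin\sigma(\mathcal{T})$. Writing $\mathcal{T}=L_A-R_B$, where $L_A(X)=AX$ and $R_B(X)=XB$ are left and right multiplication, one checks immediately that $L_A$ and $R_B$ commute (both send $X$ to $AXB$ when composed), and that $\sigma(L_A)=\sigma(A)$, $\sigma(R_B)=\sigma(B)$, since $\lambda-L_A=L_{\lambda-A}$ is invertible on $B(H)$ if and only if $\lambda-A$ is invertible on $H$. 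For a commuting pair the holomorphic functional calculus (equivalently, the projection property of the joint spectrum) yields $\sigma(L_A-R_B)\subseteq\sigma(A)-\sigma(B)$. Since $\sigma(A)\cap\sigma(B)=\varnothing$ means no $\alpha\in\sigma(A)$ and $\beta\in\sigma(B)$ satisfy $\alpha-\beta=0$, we get $0\notin\sigma(\mathcal{T})$, and invertibility of $\mathcal{T}$ delivers both existence and uniqueness at once.

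The route I would actually favour, however, sidesteps the joint-spectrum machinery and produces an explicit solution. Using the compactness and disjointness of $\sigma(A)$ and $\sigma(B)$, I would choose a finite system of smooth contours $\Gamma$ bounding an open set whose closure contains $\sigma(A)$ and avoids $\sigma(B)$, and set
\[
X=\frac{1}{2\pi i}\int_\Gamma (\lambda-A)^{-1}S(\lambda-B)^{-1}\,d\lambda.
\]
Applying the resolvent identities $A(\lambda-A)^{-1}=\lambda(\lambda-A)^{-1}-I$ and $(\lambda-B)^{-1}B=\lambda(\lambda-B)^{-1}-I$, the integrand of $AX-XB$ telescopes to $(\lambda-A)^{-1}S-S(\lambda-B)^{-1}$. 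Integrating, the first term returns $S$, because $\frac{1}{2\pi i}\int_\Gamma(\lambda-A)^{-1}\,d\lambda$ is the Riesz idempotent of $A$ over a region containing all of $\sigma(A)$, hence equals $I$; the second term vanishes, because $\Gamma$ encircles none of $\sigma(B)$, so $\frac{1}{2\pi i}\int_\Gamma(\lambda-B)^{-1}\,d\lambda=0$. This establishes existence with an explicit formula.

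For uniqueness I would suppose $AX=XB$, rewrite it as $(\lambda-A)X=X(\lambda-B)$, and conjugate by resolvents to obtain $(\lambda-A)^{-1}X=X(\lambda-B)^{-1}$ for every $\lambda$ outside $\sigma(A)\cup\sigma(B)$. Integrating this identity over the same $\Gamma$ gives $X=IX$ on the left (the Riesz idempotent of $A$ is again $I$) and $X\cdot 0=0$ on the right (the contour avoiding $\sigma(B)$), so $X=0$. The main obstacle in either approach is the rigorous separation step: constructing $\Gamma$ as the boundary of an open set whose closure is disjoint from $\sigma(B)$ while enclosing $\sigma(A)$, which rests on the compactness of the two spectra and their disjointness. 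In the abstract route the corresponding technical point is justifying the spectral-mapping inclusion $\sigma(L_A-R_B)\subseteq\sigma(A)-\sigma(B)$ for the commuting pair $(L_A,R_B)$.
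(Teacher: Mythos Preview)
Your proof is correct. Both routes you sketch are standard: the first is essentially Lumer--Rosenblum's argument via the spectral inclusion $\sigma(L_A-R_B)\subseteq\sigma(L_A)-\sigma(R_B)$ for the commuting pair of multiplication operators, and the second is Rosenblum's original contour-integral construction, which you carry out carefully (the telescoping of $AX-XB$ into the two Riesz-idempotent integrals is exactly the point, and your uniqueness argument via integrating $(\lambda-A)^{-1}X=X(\lambda-B)^{-1}$ over $\Gamma$ is clean).

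As for comparison with the paper: there is nothing to compare. The paper does not prove this lemma at all; it is quoted as a known result with references to Rosenblum and to Bhatia--Rosenthal, and is then used as a black box in the proof of Theorem~\ref{THM improvment PUTNAM}. So your write-up actually supplies what the paper merely cites.
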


Now, we prove Theorem \ref{THM improvment PUTNAM}:

\begin{proof}
We have
\[T^2=C\Longleftrightarrow T^2=A^2-B^2+i(AB+BA)=C.\]
By the self-adjointness of $A$ and $B$, we obtain the
self-adjointness of $A^2-B^2$ and $AB+BA$ as well. Hence
\[\left\{\begin{array}{c}
                                           A^2-B^2=C, \\
                                           AB+BA=0.
                                         \end{array}
\right.\]
\begin{enumerate}
  \item If $\sigma(A)\cap \sigma(-A)=\varnothing$, then Lemma
  \ref{Rosenblum AX-XB=C THM} says that the equation
  \[AB-B(-A)=AB+BA=0\] has a unique
  solution which is necessarily $B=0$. Hence $T=A$ is self-adjoint.
  If $0\in\sigma(T)$, then $0\in\sigma(A)$ and so $0\in\sigma(-A)$
  too. This, however, would violate the assumption $\sigma(A)\cap
  \sigma(-A)=\varnothing$. Therefore, $T$ is invertible.
  \item When $\sigma(B)\cap \sigma(-B)=\varnothing$, a similar
  method yields $A=0$, i.e. $T=iB$. Hence $T$ is invertible
  because $\sigma(B)\cap \sigma(-B)=\varnothing$.
\end{enumerate}
\end{proof}

A result by Embry \cite{Embry Simil.} may be readjusted as follows:

\begin{lem}
Let $A,B\in B(H)$ be normal and such that $AB=-BA$. Designate the
numerical range of $A$ by $W(A)$. If $\sigma(A)\cap
\sigma(-A)=\varnothing$ or $0\not\in W(A)$ (resp. $\sigma(B)\cap
\sigma(-B)=\varnothing$ or $0\not\in W(B)$), then $B=0$ (resp.
$A=0$).
\end{lem}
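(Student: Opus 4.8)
The plan is to verify the two sufficient conditions on $A$ separately (the conditions on $B$ then follow by symmetry, since $AB=-BA$ is the same as $BA=-AB$, so $A$ and $B$ play interchangeable roles). Under the hypothesis $\sigma(A)\cap\sigma(-A)=\varnothing$ I would simply rewrite the relation as $AB-B(-A)=0$ and apply Lemma \ref{Rosenblum AX-XB=C THM} to the pair $A,-A$: their spectra being disjoint, the equation $AX-X(-A)=0$ has a \emph{unique} solution in $B(H)$, which is obviously $X=0$; hence $B=0$. (This part uses neither normality nor the numerical range.)

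The real work is the case $0\notin W(A)$. First I would bring in the Fuglede--Putnam theorem: writing $AB=B(-A)$ with $A$ and $-A$ both normal gives $A^*B=-BA^*$, so that $P:=\Real A$ and $Q:=\Ima A$ each anticommute with $B$; passing to adjoints, $P$ and $Q$ also anticommute with $B^*$, hence with $R:=\Real B$ and with $\Ima B$. Suppose, for contradiction, that $B\neq0$; then $R\neq0$ or $\Ima B\neq0$, and I may assume $R\neq0$ (the other case being symmetric, with $R$ replaced by $\Ima B$). From $PR=-RP$ one gets $PR^{2}=R^{2}P$, so $P$ — and likewise $Q$ — commutes with $|R|=(R^{2})^{1/2}$, and therefore with every spectral projection of $|R|$. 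Since $R\neq0$ we have $\||R|\|\in\sigma(|R|)$, so for a suitable $\delta>0$ the spectral projection $F$ of $|R|$ onto $[\delta,\infty)$ is nonzero; moreover $R$ commutes with $|R|$, hence with $F$. Thus $H_1:=\operatorname{ran}F\neq\{0\}$ reduces $P$, $Q$ and $R$; on $H_1$ the operator $R_1:=R|_{H_1}$ is invertible self-adjoint (it satisfies $|R_1|\geq\delta I$), the operators $P_1:=P|_{H_1}$, $Q_1:=Q|_{H_1}$ still anticommute with $R_1$, and $W(A|_{H_1})\subseteq W(A)$, so $0\notin W(A|_{H_1})$.

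Now I would manufacture a symmetry on $H_1$: put $V:=R_1|R_1|^{-1}$, a self-adjoint unitary with $V^{2}=I$. Because $P_1,Q_1$ commute with $|R_1|$ (they commute with $R_1^{2}$), the relations $P_1R_1=-R_1P_1$ and $Q_1R_1=-R_1Q_1$ collapse to $P_1V=-VP_1$ and $Q_1V=-VQ_1$. Split $H_1=H_+\oplus H_-$ into the $(\pm1)$-eigenspaces of $V$; these are not both trivial, so pick a unit vector $x$ in a nonzero one, say $x\in H_+$. Then $VP_1x=-P_1Vx=-P_1x$, so $P_1x\in H_-$ and $\langle P_1x,x\rangle=0$; likewise $\langle Q_1x,x\rangle=0$. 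Hence $\langle Ax,x\rangle=\langle P_1x,x\rangle+i\langle Q_1x,x\rangle=0$, i.e. $0\in W(A|_{H_1})\subseteq W(A)$, contradicting the hypothesis. Therefore $B=0$; interchanging $A$ and $B$ gives the parenthetical statement.

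The step I expect to be delicate is the last one, and the reason is that $W(A)$ need not be closed: it is \emph{not} enough to observe that the anticommutation forces $W(A)$ to be symmetric about the origin, since that would only place $0$ in $\overline{W(A)}$. Passing to the reducing subspace $H_1$, on which $R$ becomes invertible, is exactly what makes the self-adjoint unitary $V=\operatorname{sgn}(R_1)$ available and thereby produces an \emph{honest} unit vector $x$ with $\langle Ax,x\rangle=0$. (One could instead invoke a supporting-line argument at $0\in\partial\overline{W(A)}$ to reduce, after a rotation, to $\Real A\geq0$ and then run a positivity computation with $(\Real A)^{1/2}$; I find the reducing-subspace route cleaner and free of the boundary subtlety.)
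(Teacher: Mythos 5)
Your argument is correct, but it is genuinely different from what the paper does: the paper offers no proof of this lemma at all, presenting it as a known result obtained by ``readjusting'' Embry's theorem from \cite{Embry Simil.} (apply Embry to $AB=B(-A)$ with the normal pair $A,-A$ and intertwiner $B$ when $0\notin W(B)$, and symmetrically), while the spectral-disjointness alternative is exactly the Rosenblum argument you give, which is also how the paper uses Lemma \ref{Rosenblum AX-XB=C THM} in Theorem \ref{THM improvment PUTNAM}. Your treatment of the numerical-range case is a self-contained re-proof of the relevant special case of Embry's theorem, and it checks out: Fuglede--Putnam turns $AB=B(-A)$ into anticommutation of $\Real A$ and $\Ima A$ with $B$ and $B^*$, hence with $\Real B$ and $\Ima B$; squaring gives commutation with $|R|$ and its spectral projections, so the nonzero spectral subspace $H_1$ for $[\delta,\infty)$ reduces everything and makes $R_1$ invertible; and the self-adjoint unitary $V=R_1|R_1|^{-1}$ anticommuting with $P_1,Q_1$ produces a unit vector $x$ in an eigenspace of $V$ with $\langle Ax,x\rangle=0$, contradicting $0\notin W(A)$. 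You are also right to flag, and correctly handle, the subtlety that anticommutation alone only forces $0\in\overline{W(A)}$; the reducing-subspace step is what yields an honest vector. Two side benefits of your route worth noting: the spectral case uses no normality at all, and the numerical-range case uses normality of $A$ only (through Fuglede--Putnam), so your proof is slightly more general than the quoted statement; the paper's citation, by contrast, is shorter but leaves the reader to perform the readjustment of Embry's result.
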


Using the same method as above and the foregoing lemma, we may
establish:

\begin{pro}
Let $C\in B(H)$ be self-adjoint and let $T=A+iB\in B(H)$ be such
that $T^2=C$. If $0\not\in W(A)$ (or $0\not\in W(B)$), then $T$ is
self-adjoint. As above, in the first case $T=A$ and in the second
$T=iB$.
\end{pro}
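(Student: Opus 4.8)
The plan is to reuse, essentially verbatim, the opening of the proof of Theorem~\ref{THM improvment PUTNAM}, and then to invoke the foregoing (readjusted Embry) lemma in place of Lemma~\ref{Rosenblum AX-XB=C THM}. First I would expand, via the Cartesian decomposition $T=A+iB$,
\[T^2=A^2-B^2+i(AB+BA).\]
Since $A$ and $B$ are self-adjoint, both $A^2-B^2$ and $AB+BA$ are self-adjoint; as $C$ is assumed self-adjoint, the uniqueness of the Cartesian decomposition recalled in the introduction (if $A+iB=C+iD$ with all four operators self-adjoint then $A=C$ and $B=D$) forces
\[A^2-B^2=C\quad\text{and}\quad AB+BA=0,\]
that is, $AB=-BA$.

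Next I would observe that self-adjoint operators are, in particular, normal, so the pair $A,B$ meets the hypotheses of the foregoing lemma: they are normal and anticommute. Consequently, if $0\not\in W(A)$ the lemma yields $B=0$, whence $T=A$ is self-adjoint; and if $0\not\in W(B)$ the lemma yields $A=0$, whence $T=iB$ is again self-adjoint. This settles both cases.

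I do not expect a genuine obstacle here, since the entire weight of the argument is carried by the foregoing lemma, which we are free to assume. The only points deserving a line of care are: (i) explicitly recording that $AB+BA$ is self-adjoint, because it is precisely this that licenses matching it with the (vanishing) imaginary part of $C$; and (ii) resisting the temptation to also claim invertibility of $T$ — unlike the spectral hypothesis used in Theorem~\ref{THM improvment PUTNAM}, the condition $0\not\in W(A)$ does not prevent $0\in\sigma(A)$ (consider multiplication by the independent variable on $L^2[0,1]$), so the proposition rightly asserts only self-adjointness. I would phrase the final write-up so that nothing is claimed beyond that.
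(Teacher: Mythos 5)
Your proposal is correct and follows exactly the route the paper indicates: extract $A^2-B^2=C$ and $AB+BA=0$ from the Cartesian decomposition as in the proof of Theorem~\ref{THM improvment PUTNAM}, then apply the readjusted Embry lemma to the anticommuting (hence normal) self-adjoint pair $A,B$. Your side remark about why invertibility cannot be claimed under $0\notin W(A)$ is also accurate, though the paper itself does not comment on it.
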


We know that if $T\in B(H)$, then $T^2=0$ does not, in general,
imply that $T=0$. We also know that if $T$ satisfies
$\|T^2\|=\|T\|^2$ (for example, if $T$ is self-adjoint or normal or
normaloid in general i.e. $\|T^n\|=\|T\|^n$ for all $n$), then
$T^2=0$ does imply that $T=0$. The following result is therefore of
interest.

\begin{pro}\label{666}
Let $T\in B(H)$ be such that $T^2=0$. If $\Real T\geq 0$ (or $\Ima
T\geq0$), then $T$ is normal and so $T=0$.
\end{pro}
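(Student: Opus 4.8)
The plan is to run the same Cartesian‑decomposition machinery used in the proof of Theorem~\ref{THM improvment PUTNAM}. Write $T=A+iB$ with $A=\Real T$ and $B=\Ima T$ self-adjoint, and assume first $A\geq 0$. Expanding,
\[
0=T^2=A^2-B^2+i(AB+BA),
\]
and since both $A^2-B^2$ and $AB+BA$ are self-adjoint, the uniqueness of the Cartesian decomposition gives
\[
A^2=B^2\qquad\text{and}\qquad AB+BA=0 .
\]

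Next I would exploit the positivity of $A$. Because $B$ is self-adjoint, $B^2=B^*B\geq 0$, so $A$ is a \emph{positive} operator with $A^2=B^2$; by the uniqueness of the positive square root this forces $A=\sqrt{B^2}=|B|$. Now, via the continuous functional calculus, $|B|$ commutes with $B$ (it lies in the commutative C*-algebra generated by $B$), hence
\[
AB=|B|B=B|B|=BA .
\]
Combining this with $AB+BA=0$ yields $AB=BA=0$; in particular $AB=BA$, so $T$ is normal. Being normal, $T$ is normaloid, so $\|T\|^2=\|T^2\|=0$ and therefore $T=0$. (One may also finish directly: $B^3=A^2B=A(AB)=0$ with $B$ self-adjoint gives $B=0$, and then $A=|B|=0$.) The case $\Ima T\geq 0$ is obtained by interchanging the roles of $A$ and $B$, now using $B=|A|$; the relations $A^2=B^2$ and $AB+BA=0$ are symmetric in $A$ and $B$, so nothing changes.

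The only substantive point is the passage from the \emph{anticommutation} relation $AB+BA=0$ to the \emph{commutation} relation $AB=BA$, and this is exactly where positivity enters: it is what allows the identification $A=|B|$ (or $B=|A|$) and hence the use of the fact that $|B|$ commutes with $B$. Without the positivity hypothesis $A$ would merely be \emph{some} self-adjoint square root of $B^2$, which need not commute with $B$ — compare the family $A_x$ of self-adjoint square roots of the identity from the Introduction. Beyond spotting this identification, I expect no real obstacle; the remaining steps are routine.
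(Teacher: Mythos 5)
Your proof is correct and follows essentially the same route as the paper: Cartesian decomposition giving $A^2=B^2$ and $AB+BA=0$, then positivity plus the functional-calculus fact that positive square roots commute appropriately to upgrade anticommutation to $AB=BA$, and finally the normaloid identity $\|T\|^2=\|T^2\|=0$. The only (harmless) variation is in the middle step: the paper deduces $A^2B=BA^2$ from the anticommutation and then uses $A=\sqrt{A^2}$, whereas you identify $A=|B|$ directly from $A^2=B^2$ and use that $|B|$ commutes with $B$ — the same square-root principle in slightly different clothing.
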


\begin{proof}
Write $T=A+iB$ where $A,B\in B(H)$ are self-adjoint where $A=\Real
T$ and $B=\Ima T$. Then clearly
\[T^2=A^2-B^2+i(AB+BA).\]
So, if $T^2=0$, then
\[A^2-B^2+i(AB+BA)=0\Longrightarrow \left\{\begin{array}{c}
                                           A^2=B^2, \\
                                           AB=-BA.
                                         \end{array}
\right.\] Hence, if $A\geq0$ (a similar argument works when
$B\geq0$), then
\[AB=-BA\Longrightarrow A^2B=-ABA=BA^2\Longrightarrow AB=BA.\]
 Therefore, $T$ is
normal. Accordingly,
\[\|T\|^2=\|T^2\|=0\Longrightarrow T=0,\]
as suggested.
\end{proof}

\begin{cor}
Let $T\in B(H)$ be such that $T^2=0$. If any of $\sigma(\Real T)$ or
$\sigma(\Ima T)$ is a subset of either $\R^+$ or $\R^-$, then $T$ is
normal and so $T=0$.
\end{cor}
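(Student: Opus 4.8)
The plan is to reduce every one of the four cases directly to Proposition \ref{666}. The only ingredient needed beyond that proposition is the standard spectral characterization of positivity for self-adjoint operators: a self-adjoint $S\in B(H)$ satisfies $S\geq 0$ exactly when $\sigma(S)\subseteq\R^+$, and $S\leq 0$, i.e. $-S\geq 0$, exactly when $\sigma(S)\subseteq\R^-$. Since $\Real T$ and $\Ima T$ are self-adjoint, each of the four hypotheses of the corollary translates into a positivity (or negativity) statement about one of the Cartesian components of $T$.

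First I would dispose of the ``$\R^+$'' cases. If $\sigma(\Real T)\subseteq\R^+$, then $\Real T\geq 0$, so Proposition \ref{666} applies word for word and yields that $T$ is normal and hence $T=0$; if $\sigma(\Ima T)\subseteq\R^+$, then $\Ima T\geq 0$ and one invokes the $\Ima T\geq 0$ branch of the same proposition. For the ``$\R^-$'' cases I would pass from $T$ to $-T$. Note that $(-T)^2=T^2=0$, while $\Real(-T)=-\Real T$ and $\Ima(-T)=-\Ima T$. Thus $\sigma(\Real T)\subseteq\R^-$ gives $\Real(-T)\geq 0$, and $\sigma(\Ima T)\subseteq\R^-$ gives $\Ima(-T)\geq 0$; applying Proposition \ref{666} to $-T$ shows $-T$ is normal, whence $T$ is normal, and that $-T=0$, i.e. $T=0$.

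As this is a direct corollary, I do not expect a genuine obstacle. The only two points requiring a little care are invoking the correct direction of the spectral criterion for positivity of a self-adjoint operator, and observing that replacing $T$ by $-T$ leaves the hypothesis $T^2=0$ intact while simultaneously reversing the sign of both $\Real T$ and $\Ima T$, which is precisely what lets the two ``$\R^-$'' cases inherit the conclusion from Proposition \ref{666}.
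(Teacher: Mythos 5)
Your proof is correct and is exactly the reduction the paper intends (the corollary is stated without proof as an immediate consequence of Proposition \ref{666}): translate the spectral hypotheses into $\Real T\geq 0$ or $\Ima T\geq 0$ via the spectral characterization of positivity for self-adjoint operators, and handle the $\R^-$ cases by passing to $-T$, which preserves $T^2=0$ and flips the signs of the real and imaginary parts.
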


\begin{rema}
The condition $\Real T\geq 0$ (or $\Ima T\geq0$) in Proposition
\ref{666} is, in general, not sufficient to make $T$ normaloid.
Indeed, if $V$ is the Volterra operator on $L^2[0,1]$ say, then (cf.
\cite{Mortad-Oper-TH-BOOK-WSPC}) $V$ is not normaloid as
\[r(V)=0\neq \frac{2}{\pi}=\|V\|\]
where $r(V)$ denotes the spectral radius of $V$. It can, however,
easily be checked that $\Real V\geq0$.
\end{rema}

We finish this section with a general criterion guaranteeing the
normality of the square root (this generalizes Proposition
\ref{666}):

\begin{thm}
Let $S=C+iD\in B(H)$ be such that $T^2=S$ where $T=A+iB\in B(H)$.
Then (if $[\cdot,\cdot]$ denotes the usual commutator)
\[[B,C]=[A,D]\]
\text{ and }
\[[A,C]=[B,D].\]
(Consequently, $BC=CB\Leftrightarrow AD=DA$ and
$AC=CA\Leftrightarrow BD=DB$).

 If $A\geq0$ (or $A\leq0$), then
\[T\text{ is normal }\Longleftrightarrow AD=DA.\]

If $B\geq0$ (or $B\leq0$), then
\[T\text{ is normal }\Longleftrightarrow BD=DB.\]
In particular, if $S$ is self-adjoint (i.e. $D=0$) and $A\geq0$ or
$A\leq0$ or $B\geq0$ or $B\leq 0$, then $T$ is always normal.
\end{thm}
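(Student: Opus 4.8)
The plan is to mimic the proofs of Theorem \ref{THM improvment PUTNAM} and Proposition \ref{666} by expanding the square. Writing $T=A+iB$ with $A,B$ self-adjoint, we have $T^2 = A^2-B^2 + i(AB+BA)$, and since $A^2-B^2$ and $AB+BA$ are self-adjoint, uniqueness of the Cartesian decomposition forces $C = A^2-B^2$ and $D = AB+BA$. The commutator identities then become pure algebra. First, $[B,C] = [B,A^2-B^2] = [B,A^2]$ because $[B,B^2]=0$; expanding $[A,D]=[A,AB+BA]$ and cancelling the two $ABA$ terms gives $[A,D] = A^2B - BA^2$, which agrees with $[B,C]$ up to sign, and that sign is irrelevant for the asserted equivalence $BC=CB \Leftrightarrow AD=DA$. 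Symmetrically, $[A,C] = -[A,B^2] = B^2A - AB^2$ and $[B,D] = B^2A - AB^2$, so $[A,C]=[B,D]$, whence $AC=CA \Leftrightarrow BD=DB$.

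For the normality criterion I would use the basic fact recalled in the introduction: $T$ is normal if and only if $AB=BA$. The forward implication ``$T$ normal $\Rightarrow AD=DA$'' then needs no positivity assumption at all: if $AB=BA$, the (commutative) algebra generated by $A$ and $B$ contains $D=AB+BA$, so $A$ commutes with $D$. For the converse, assume $A\geq0$ and $AD=DA$; writing this out, $A^2B+ABA = ABA+BA^2$, i.e. $A^2B = BA^2$, so $B$ commutes with $A^2$. The crucial step is to promote this to $AB=BA$: since $A\geq0$ we have $\sqrt{A^2}=A$ by uniqueness of the positive square root, and any bounded operator commuting with the positive operator $A^2$ commutes with $\sqrt{A^2}$ (approximate $\sqrt{t}$ uniformly on $\sigma(A^2)$ by polynomials). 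Hence $AB=BA$ and $T$ is normal. The case $A\leq0$ reduces to this by passing to $-A\geq0$, which has the same square, and the statements with $B\geq0$ or $B\leq0$ are strictly parallel: from $BD=DB$ one extracts $AB^2 = B^2A$ and then uses $\sqrt{B^2}=B$.

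Finally, if $S$ is self-adjoint then $D=0$, so $AD=DA$ and $BD=DB$ hold automatically, and therefore any one of $A\geq0$, $A\leq0$, $B\geq0$, $B\leq0$ already forces $T$ to be normal; in particular this recaptures Proposition \ref{666}, where $T^2=0$ is self-adjoint. The only non-algebraic ingredient, and the step I would single out as the heart of the argument, is exactly the implication ``$B$ commutes with $A^2$, and $A\geq0$, imply $B$ commutes with $A$'' (together with its $B$-counterpart); once this square-root commutation lemma is granted, the rest is bookkeeping with the commutator identities and the characterization of normality as $AB=BA$.
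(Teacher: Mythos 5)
Your proof is correct and follows essentially the same route as the paper: expand $T^2=A^2-B^2+i(AB+BA)$, identify $C=A^2-B^2$ and $D=AB+BA$ by uniqueness of the Cartesian decomposition, derive the commutator identities algebraically, and upgrade $A^2B=BA^2$ to $AB=BA$ via the uniqueness of the positive square root (a step the paper compresses into ``clearly''). Your remark that $[B,C]$ and $[A,D]$ actually differ by a sign is accurate --- the computation gives $CB-BC=AD-DA$, so the paper's stated identity $[B,C]=[A,D]$ has a sign slip --- but, as you note, this does not affect the equivalences $BC=CB\Leftrightarrow AD=DA$ and $AC=CA\Leftrightarrow BD=DB$ or the normality criteria.
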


\begin{proof}
By assumption,
\[A^2-B^2+i(AB+BA)=C+iD\Longrightarrow \left\{\begin{array}{c}
                                           A^2-B^2=C, \\
                                           AB+BA=D.
                                         \end{array}
\right.\] Hence
\[AB+BA=D\Longrightarrow A^2B+ABA=AD\text{ and }ABA+BA^2=DA\]
and so
\[A^2B-BA^2=AD-DA.\]
Since $A^2=B^2+C$, we get
\[BC-CB=AD-DA.\]

Also,
\[AB+BA=D\Longrightarrow AB^2+BAB=DB\text{ and }BAB+B^2A=BD\]
and so as above
\[B^2A-AB^2=BD-DB\]
and by invoking $B^2=A^2-C$, we obtain
\[AC-CA=BD-DB.\]

To show the last two assertions, we have from above that
\[A^2B=BA^2\Longleftrightarrow AD=DA \text{ and } B^2A=AB^2\Longleftrightarrow BD=DB.\]
So if $A\geq0$, then clearly
\[A^2B=BA^2\Longleftrightarrow AB=BA,\]
and the previous holds iff $T=A+iB$ is normal. A similar reasoning
applies when $A\leq0$. Finally, argue similarly in the event
$B\geq0$ or $B\leq0$ and this completes the proof.
\end{proof}

\section{Explicit Construction of Roots}

It is known (cf. \cite{RUD}) that if $N$ is normal with a spectral
integral $\int_{\sigma(N)}\lambda dE$ where $E$ is a spectral
measure, then $\int_{\sigma(N)}\sqrt\lambda dE$ is square root of
$N$ where $\sqrt\lambda$ is a complex square root of $\lambda$.

Under an extra condition on the normal operator, we can have an even
more explicit formula.

\begin{thm}\label{normal square root principal THM}
Let $N=C+iD\in B(H)$ be normal with either $D\geq 0$ or $D\leq 0$
(equivalently, $\sigma(D)\subset\R^+$ or $\sigma(D)\subset \R^-$).
When $D\geq 0$, then
\[T=\left(\frac{|N|+C}{2}\right)^{\frac{1}{2}}+i\left(\frac{|N|-C}{2}\right)^{\frac{1}{2}}\]
is a normal square root of $N$. If $D\leq 0$, then
\[T=\left(\frac{|N|+C}{2}\right)^{\frac{1}{2}}-i\left(\frac{|N|-C}{2}\right)^{\frac{1}{2}}\]
is another normal square root of $N$.
\end{thm}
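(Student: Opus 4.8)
The plan is to verify directly that the displayed operator $T$ squares to $N$ and is normal, the whole argument resting on a little commutativity that comes for free from the normality of $N$. Write $C=\Real N$ and $D=\Ima N$, so that normality of $N$ is equivalent to $CD=DC$. First I would compute $|N|$: since $CD=DC$,
\[N^*N=(C-iD)(C+iD)=C^2+D^2,\]
so $|N|=\sqrt{C^2+D^2}$ is a positive operator. Moreover $C$ commutes with $C^2+D^2$ (because $CD^2=D^2C$), hence $C$ commutes with its positive square root $|N|$. Next I would check that the two positive square roots appearing in the statement are legitimate: from $C^2\le C^2+D^2=|N|^2$ and Theorem \ref{theorem squrae root first} we get $|C|=\sqrt{C^2}\le |N|$, and then Proposition \ref{absssssssssssssssss} gives $-|N|\le C\le |N|$, that is, $|N|+C\ge 0$ and $|N|-C\ge 0$. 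Set $P=\left(\frac{|N|+C}{2}\right)^{1/2}$ and $Q=\left(\frac{|N|-C}{2}\right)^{1/2}$, the unique positive square roots.

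The key observation is the identity for $PQ$. Since $|N|$ and $C$ commute, so do $\frac{|N|+C}{2}$ and $\frac{|N|-C}{2}$, and therefore so do their positive square roots $P$ and $Q$; furthermore
\[PQ=\left(\frac{|N|+C}{2}\cdot\frac{|N|-C}{2}\right)^{1/2}=\left(\frac{|N|^2-C^2}{4}\right)^{1/2}=\tfrac12\sqrt{D^2}=\tfrac12|D|.\]
In the case $D\ge 0$ this is exactly $\tfrac12 D$, and then, using $PQ=QP$,
\[T^2=(P+iQ)^2=P^2-Q^2+i(PQ+QP)=\frac{|N|+C}{2}-\frac{|N|-C}{2}+2iPQ=C+iD=N.\]
Finally $T=P+iQ$ with $P,Q$ self-adjoint and $PQ=QP$, so $T$ is normal by the criterion recalled in the Introduction ($T$ is normal iff $\Real T$ and $\Ima T$ commute). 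The case $D\le 0$ is the same after replacing $Q$ by $-Q$: there $PQ=\tfrac12|D|=-\tfrac12 D$, so $T=P-iQ$ yields $T^2=C-2iPQ=C+iD=N$, and $T$ is again normal since $P$ and $-Q$ commute.

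I expect the only real subtlety to be the commutativity bookkeeping — justifying that $C$ commutes with $|N|=\sqrt{C^2+D^2}$ and hence that $P$ commutes with $Q$ — together with the clean identification $2PQ=|D|$ via uniqueness of the positive square root. Once those are in place the identity $T^2=N$ and the normality of $T$ are immediate, so there is no genuine obstacle; the remaining care is simply to keep track that each square root invoked is the positive one and that the hypothesis $D\ge 0$ (resp. $D\le 0$) is used precisely at the step where $|D|$ is replaced by $D$ (resp. $-D$).
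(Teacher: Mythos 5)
Your proof is correct and follows essentially the same route as the paper: the same positivity check of $|N|\pm C$ via the two quoted results, the same commuting self-adjoint operators $P,Q$, and the same identification $2PQ=\sqrt{|N|^2-C^2}=|D|$ (the paper writes it as $(D^2)^{1/2}=D$), with the sign of $D$ used exactly at that step. Your added care in justifying that $C$ commutes with $|N|$ and that $PQ$ is the positive square root of the product is a welcome but inessential refinement of the paper's argument.
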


\begin{proof}
The hardest part of the proof is the meticulousness! First $C$ and
$D$ are self-adjoint. Besides, $CD=DC$ as $N$ is normal. Then
\[|N|^2=N^*N=(C-iD)(C+iD)=C^2+D^2\geq C^2\]
as $D^2\geq0$ because $D$ is self-adjoint. By Theorem \ref{theorem
squrae root first}, we get $|N|\geq |C|$. Hence, by Proposition
\ref{absssssssssssssssss}
\[-|N|\leq C\leq |N|\]
and so
\[|N|-C\geq 0\text{ and } |N|+C\geq0.\]
Therefore, it makes sense to define their \textit{positive} square
roots. Consider (the self-adjoint!)
\[A=\left(\frac{|N|+C}{2}\right)^{\frac{1}{2}}\text{ and } B=\left(\frac{|N|-C}{2}\right)^{\frac{1}{2}}.\]
Since $|N|$ commutes with $C$, it follows that $A$ commutes with
$B$. Consequently, the operator $M:=A+iB$ is normal. Finally,
\begin{enumerate}
  \item If $D\geq 0$, then
  \begin{align*}
  M^2&=(A+iB)(A+iB)\\&=A^2-B^2+i(AB+BA)\\
  &= \frac{|N|+C}{2}-\frac{|N|-C}{2}+2i\left(\frac{|N|^2-C^2}{4}\right)^{\frac{1}{2}}\\
  &=C+i(D^2)^{\frac{1}{2}}\\
  &=C+iD\\
  &=N,
  \end{align*}
  that is $M$ is a normal square root of $N$.
  \item A similar argument applies when $D\leq 0$. In this case,
  \[(M^*)^2=N,\]
  that is, $M^*$ is a normal square root of $N$. This marks the end
  of the proof.
\end{enumerate}
\end{proof}

This approach, besides its explicit construction, does apply for
higher powers of the type $2^n$. In other language, the algorithm
prescribed in the previous proof may be applied to deal with
biquadratic equations or in general equations of the form
$T^{2^{n}}=N$ where $n\in\N$. We have:

\begin{cor}
Let $N=C+iD\in B(H)$ be normal with either $D\geq 0$ (or $D\leq 0$).
Let $T\in B(H)$ be such that $T^4=N$. Then a normal 4th root of $T$
is given by
\[T=\left(\frac{|S|+\Real S}{2}\right)^{\frac{1}{2}}+i\left(\frac{|S|-\Real S}{2}\right)^{\frac{1}{2}}\]
where
\[S=\left(\frac{|N|+C}{2}\right)^{\frac{1}{2}}+i\left(\frac{|N|-C}{2}\right)^{\frac{1}{2}}\]
\end{cor}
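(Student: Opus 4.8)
The plan is to apply Theorem \ref{normal square root principal THM} twice in succession. First I would invoke the theorem with $N = C+iD$ (which is normal with $D\geq 0$ by hypothesis) to conclude that
\[S = \left(\frac{|N|+C}{2}\right)^{\frac{1}{2}} + i\left(\frac{|N|-C}{2}\right)^{\frac{1}{2}}\]
is a \emph{normal} square root of $N$; that is, $S$ is normal and $S^2 = N$.

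The key observation is then that $S$ itself meets the hypotheses of the same theorem. Writing $S = C' + iD'$ with $C' = \left(\tfrac{|N|+C}{2}\right)^{1/2}$ and $D' = \left(\tfrac{|N|-C}{2}\right)^{1/2}$, both summands are self-adjoint (in fact positive), and crucially $D' = \Ima S \geq 0$ since it is defined as a positive square root. Hence Theorem \ref{normal square root principal THM} applies verbatim with $S$ in the role of $N$, yielding the normal square root
\[T = \left(\frac{|S| + \Real S}{2}\right)^{\frac{1}{2}} + i\left(\frac{|S| - \Real S}{2}\right)^{\frac{1}{2}}\]
of $S$, so that $T^2 = S$ and $T$ is normal. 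Composing, $T^4 = (T^2)^2 = S^2 = N$, which is the assertion (with the obvious correction that the displayed operator is a $4$th root of $N$, not of $T$).

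I would also record the simplification $|S| = |N|^{1/2}$: since $S$ is normal, $|S|^2 = S^*S = (\Real S)^2 + (\Ima S)^2 = \tfrac{|N|+C}{2} + \tfrac{|N|-C}{2} = |N|$, so the formula for $T$ can be rewritten purely in terms of $|N|$ and $C$, which also makes transparent that $\Real S \geq 0$ and $|S| \pm \Real S \geq 0$ (the latter following from Theorem \ref{theorem squrae root first} and Proposition \ref{absssssssssssssssss}, exactly as in the proof of Theorem \ref{normal square root principal THM}).

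There is essentially no genuine obstacle here — all the analytic content sits in Theorem \ref{normal square root principal THM}. The only point requiring care is the bookkeeping that certifies the output $S$ of the first step is again normal with nonnegative imaginary part, which is precisely what licenses a second application. Setting the argument up this way also makes the inductive generalization to $T^{2^n} = N$ immediate: each square root produced has positive (hence $\geq 0$) imaginary part, so the standing hypothesis "$D\geq 0$" is automatically propagated along the iteration, and one simply iterates the construction $n$ times.
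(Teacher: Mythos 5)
Your proposal is correct and takes essentially the same route as the paper: apply Theorem \ref{normal square root principal THM} to $N$ to obtain the normal square root $S$, observe that $\Ima S=\left(\frac{|N|-C}{2}\right)^{\frac{1}{2}}\geq 0$ by construction, and apply the theorem once more to $S$, giving $T^4=S^2=N$. Your write-up is in fact a touch cleaner than the paper's (which starts from an arbitrary $T$ with $T^4=N$ and tacitly identifies $T^2$ with the constructed $S$, and whose statement says ``4th root of $T$'' where it means of $N$), and the simplification $|S|=|N|^{1/2}$ is a nice but inessential bonus.
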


\begin{proof}
Put $S=T^2$. Then $T^4=N$ becomes $S^2=N$. So, if e.g. $N$ is such
that $\sigma(\Ima N)\subset\R^+$, then $S$ is normal. Moreover, by
Theorem \ref{normal square root principal THM}, we know that
\[S=\left(\frac{|N|+C}{2}\right)^{\frac{1}{2}}+i\left(\frac{|N|-C}{2}\right)^{\frac{1}{2}}\]
and it is normal. Since clearly
\[\Ima S=\left(\frac{|N|-C}{2}\right)^{\frac{1}{2}}\geq0,\]
it follows that $T^2=S$ has a normal solution $T$ given by
\[T=\left(\frac{|S|+\Real S}{2}\right)^{\frac{1}{2}}+i\left(\frac{|S|-\Real S}{2}\right)^{\frac{1}{2}}.\]
\end{proof}

\begin{rema}
By induction, we know how to find a root of order $2^n$ of normal
operators very explicitly.
\end{rema}

We finish with the case of general nth roots. First, the following
result should be readily verified.

\begin{lem}\label{lemma e iA+2kpi I}
Let $A\in B(H)$. If $k\in\Z$, then
\[e^{i(A+2k\pi I)}=e^{iA}.\]
\end{lem}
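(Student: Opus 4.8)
The plan is to reduce everything to the elementary identity $e^{X+Y}=e^Xe^Y$, valid for commuting bounded operators $X,Y$, together with the fact that scalar multiples of the identity are central in $B(H)$. Recall first that for any $S\in B(H)$ the exponential $e^{S}=\sum_{n=0}^{\infty}S^{n}/n!$ converges absolutely in operator norm, so all the manipulations below are legitimate.

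First I would observe that $A$ and $2k\pi I$ commute, since $I$ commutes with every operator in $B(H)$. Hence, by the standard Cauchy-product (binomial) argument for absolutely convergent operator series,
\[e^{i(A+2k\pi I)}=e^{iA+2k\pi i\, I}=e^{iA}\,e^{2k\pi i\, I}.\]
Next I would evaluate the second factor straight from the power series: since $(2k\pi i\, I)^{n}=(2k\pi i)^{n}I$ for every $n$, we get
\[e^{2k\pi i\, I}=\sum_{n=0}^{\infty}\frac{(2k\pi i)^{n}}{n!}\,I=e^{2k\pi i}\,I=I,\]
because $k\in\Z$ forces $e^{2k\pi i}=1$. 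Substituting this back gives $e^{i(A+2k\pi I)}=e^{iA}\cdot I=e^{iA}$, which is exactly the asserted identity.

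There is essentially no obstacle here; the only step meriting a word of justification is the factorization $e^{X+Y}=e^{X}e^{Y}$ for commuting $X,Y$, and that is entirely standard (expand both sides as absolutely convergent double series and rearrange). Every other step is immediate, which is why the lemma was flagged as "readily verified".
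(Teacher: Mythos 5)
Your proof is correct and is exactly the standard verification the paper has in mind when it leaves the lemma as ``readily verified'': split $e^{i(A+2k\pi I)}=e^{iA}e^{2k\pi i I}$ using commutativity of $A$ with the scalar $2k\pi I$, then evaluate $e^{2k\pi i I}=e^{2k\pi i}I=I$ from the power series. Nothing further is needed.
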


\begin{thm}
Let $N\in B(H)$ be normal. Let $n\in\N$. Then $N$ has always an nth
root which is also normal and given by
\[N^{\frac{1}{n}}=|N|^{\frac{1}{n}}e^{i\left(\frac{A+2k\pi I}{n}\right)}\]
for some self-adjoint $A\in B(H)$ and where $k\in\Z$.
\end{thm}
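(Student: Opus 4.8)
The plan is to produce the root directly through the functional calculus for the normal operator $N$. By the spectral theorem, write $N=\int_{\sigma(N)}\lambda\, dE(\lambda)$ for a spectral measure $E$ supported on $\sigma(N)$. Fix once and for all a Borel branch of the argument, say $\arg_0:\C\to[0,2\pi)$ with the convention $\arg_0(0)=0$, so that $\lambda=|\lambda|e^{i\arg_0(\lambda)}$ for every $\lambda\in\C$ and $\arg_0$ is bounded (by $2\pi$) on $\sigma(N)$. Set
\[A:=\int_{\sigma(N)}\arg_0(\lambda)\, dE(\lambda),\]
which is a bounded self-adjoint operator (in fact $0\leq A\leq 2\pi I$). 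Let $|N|^{\frac1n}$ denote the unique positive $n$th root of the positive operator $|N|$. Both $|N|^{\frac1n}$ and $e^{i\left(\frac{A+2k\pi I}{n}\right)}$ are bounded Borel functions of $N$, hence they commute, and we may define
\[T:=|N|^{\frac1n}e^{i\left(\frac{A+2k\pi I}{n}\right)}=\int_{\sigma(N)}|\lambda|^{\frac1n}e^{i\left(\frac{\arg_0(\lambda)+2k\pi}{n}\right)}dE(\lambda).\]

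Next I would check that $T^n=N$. Since the functional calculus is a $*$-homomorphism and the two factors commute,
\[T^n=\int_{\sigma(N)}|\lambda|\,e^{i(\arg_0(\lambda)+2k\pi)}dE(\lambda)=\int_{\sigma(N)}|\lambda|\,e^{i\arg_0(\lambda)}dE(\lambda)=\int_{\sigma(N)}\lambda\, dE(\lambda)=N,\]
the middle equality being Lemma \ref{lemma e iA+2kpi I} read on the integrand (equivalently, $e^{2k\pi i}=1$). Normality of $T$ is immediate: it is a bounded Borel function of the normal operator $N$; alternatively, it is the product of the positive operator $|N|^{\frac1n}$ and the unitary $e^{i\left(\frac{A+2k\pi I}{n}\right)}$, which commute, and a product of two commuting normal operators is normal. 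Since $e^{i\left(\frac{A+2(k+n)\pi I}{n}\right)}=e^{i\left(\frac{A+2k\pi I}{n}\right)}$, letting $k$ run over $\Z$ really amounts to letting $k\in\{0,1,\dots,n-1\}$, and this yields the advertised family of normal $n$th roots.

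The step I expect to require the most care is not deep but purely technical: one must select the branch $\arg_0$ to be Borel measurable (so that $A$ is a genuine bounded self-adjoint operator), and one must invoke the composition rule $f(g(N))=(f\circ g)(N)$ of the functional calculus, with $g=\arg_0$ and $f(t)=e^{i(t+2k\pi)/n}$, in order to identify the second factor of $T$ with $\int e^{i(\arg_0(\lambda)+2k\pi)/n}dE(\lambda)$. If one prefers to bypass the Borel functional calculus and use only the continuous one, the same argument can be run through the polar decomposition $N=U|N|=|N|U$ with $U$ \emph{unitary}, together with the classical fact that every unitary is of the form $e^{iA}$ with $A=A^*\in B(H)$; there the one subtle point is precisely the passage from a partial isometry to a unitary $U$ when $N$ fails to be invertible, which is legitimate exactly because $\ker N=\ker N^*$ by normality.
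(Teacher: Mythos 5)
Your argument is correct, and it reaches the same formula by a genuinely more self-contained route than the paper. The paper proceeds by quoting two results: the polar decomposition $N=UP=PU$ of a normal operator with $U$ \emph{unitary} and $P=|N|$, and the theorem that every unitary equals $e^{iA}$ for some self-adjoint $A$; it then forms $P^{\frac1n}e^{i\left(\frac{A+2k\pi I}{n}\right)}$, observes it is normal as a product of commuting positive and unitary factors, and computes its $n$th power using Lemma \ref{lemma e iA+2kpi I}. You instead build everything from one application of the spectral theorem: choosing a Borel branch $\arg_0$ with values in $[0,2\pi)$ gives an explicit $A=\int_{\sigma(N)}\arg_0(\lambda)\,dE(\lambda)$ with $0\leq A\leq 2\pi I$, and the identity $T^n=N$ and the normality of $T$ become one-line computations in the Borel functional calculus (via the composition rule $f(g(N))=(f\circ g)(N)$, which is the only technical point you rightly flag). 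What your route buys is explicitness and independence from the two cited theorems -- in effect you reprove the exponential representation of the unitary part in the exact form needed -- while the paper's route is shorter on the page at the cost of outsourcing the substance to the references. Your closing remark is, in fact, precisely the paper's proof, and your observation that the unitarity of the polar factor for non-invertible $N$ rests on $\ker N=\ker N^*$ identifies the one point the paper passes over silently by citing the normal polar decomposition. Both arguments also share the same final bookkeeping: only $k\in\{0,1,\dots,n-1\}$ gives distinct roots, which you note and the paper leaves implicit.
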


\begin{proof}Since $N$ is normal, it may be expressed as (see e.g.
\cite{RUD})
\[N=UP=PU,\]
where $U$ is unitary and $P=\sqrt{N^*N}=|N|$. Since $U$ is unitary,
$U=e^{iA}$ for some self-adjoint $A\in B(H)$ (cf. Proposition 18.20
in \cite{MV}). Hence
\[N=e^{iA}P=Pe^{iA}.\]
Set for $k\in\Z$
\[M=P^{\frac{1}{n}}e^{i\left(\frac{A+2k\pi I}{n}\right)}\]
 where $P^{\frac{1}{n}}$ is the unique positive
nth root of $P$. Then $M$ is normal (cf.
\cite{Mortad-Oper-TH-BOOK-WSPC}) because it is a product of two
commuting normal operators (in fact, a product of a positive
operator and a unitary one). By the commutativity of the factors and
simple results, we obtain
\[M^n=\left(P^{\frac{1}{n}}e^{i\left(\frac{A+2k\pi I}{n}\right)}\right)^n=Pe^{i(A+2k\pi I)}=Pe^{iA}=N,\]
as needed.
\end{proof}

\begin{rema}
As is the case of many results on normal operators, the previous two
theorems too are inspired by results about complex numbers. This
corroborates and strengthens the ressemblance which is already known
to readers.
\end{rema}

\end{document}